\documentclass[12pt, reqno]{amsart}

\usepackage[body={6.3in,9.3in}]{geometry}
\usepackage{verbatim}
\setlength{\topmargin}{-1.2cm}

\usepackage[ansinew]{inputenc}

\usepackage{textcomp}
\usepackage{latexsym}
\usepackage[usenames]{color}
\usepackage{amsmath,amsfonts,amssymb,amsthm}
\usepackage{graphicx}
\usepackage{longtable}
\usepackage{bbm}
\usepackage{algorithm, algorithmicx, algpseudocode}
\usepackage{hyperref}

\clubpenalty = 10000 \widowpenalty = 10000

\usepackage{setspace}
\usepackage{array}


\def\qed{\hfill{\raggedleft{\hbox{$\Box$}}} \smallskip}

\def\R{\mathbb{R}}


\newcommand{\ds}{\displaystyle}

\theoremstyle{plain} \newtheorem{lem}{Lemma}
\theoremstyle{plain} \newtheorem{prop}[lem]{Proposition}
\theoremstyle{plain} \newtheorem{thm}[lem]{Theorem}
\theoremstyle{plain} \newtheorem{cor}[lem]{Corollary}
\theoremstyle{plain} 
\theoremstyle{plain} 
\theoremstyle{definition} \newtheorem{defn}[lem]{Definition}
\theoremstyle{definition}
\theoremstyle{definition} 
\theoremstyle{definition} 
\theoremstyle{definition}\newtheorem{ex}[lem]{Example}

\newlength\savedwidth

\def\1{\mathbf{1}}

\usepackage{cite,verbatim}
\author{Ngoc Mai Tran}
\address{Department of Mathematics, University of Texas at Austin, Austin, TX, 78712 and 
Department of Mathematics, University of Bonn, Bonn, Germany 53115
}

\DeclareMathOperator{\TP}{\mathbb{TP}}
\newcommand{\GFP}{\left.\mathcal{GF}_n\right|_{\mathcal{P}}}
\def\tconv{\text{tconv}}
\def\lin{\mathsf{lin}}
\def\Z{\mathbb{Z}}
\newcommand\ini{\text{in}}

\title{Enumerating polytropes}
\begin{document}

\begin{abstract}
Polytropes are both ordinary and tropical polytopes. We show that tropical types of polytropes in $\mathbb{TP}^{n-1}$ are in bijection with cones of a certain Gr\"{o}bner fan $\mathcal{GF}_n$ in $\R^{n^2 - n}$ restricted to a small cone called the polytrope region. These in turn are indexed by compatible sets of bipartite and triangle binomials. Geometrically, on the polytrope region, $\mathcal{GF}_n$ is the refinement of two fans: the fan of linearity of the polytrope map appeared in \cite{tran.combi}, and the bipartite binomial fan. This gives two algorithms for enumerating tropical types of polytropes: one via a general Gr\"obner fan software such as \textsf{gfan}, and another via checking compatibility of systems of bipartite and triangle binomials. We use these algorithms to compute types of full-dimensional polytropes for $n = 4$, and maximal polytropes for $n = 5$.
\end{abstract}
\maketitle

\section{Introduction}

Consider the tropical min-plus algebra $(\R, \oplus, \odot)$, where $a \oplus b = \min(a,b)$, $a\odot b = a+b$. A set $S \subset \R^n$ is tropically convex if $x,y \in S$ implies $a\odot x \oplus b \odot y \in S$ for all $a,b \in \R$. Such sets are closed under tropical scalar multiplication: if $x \in S$, then $a \odot x \in S$. Thus, one identifies tropically convex sets in $\R^n$ with their images in the tropical affine space $\mathbb{TP}^{n-1} = \R^n \backslash (1, \ldots, 1) \R$. The tropical convex hull of finitely many points in $\mathbb{TP}^{n-1}$ is a \emph{tropical polytope}. A tropical polytope is a \emph{polytrope} if it is also an ordinary convex set in~$\mathbb{TP}^{n-1}$ \cite{JoswigK10}.

Polytropes are important in tropical geometry and combinatorics. They have appeared in a variety of context, from hyperplane arrangements \cite{lampostnikov}, affine buildings \cite{affinebuilding}, to tropical eigenspaces, tropical modules \cite{butkovic, BCOQ}, and, semigroup of tropical matrices \cite{kambite}, to name a few. Their discovery and re-discovery in different contexts have granted them many names: they are the alcoved polytopes of type A of Lam and Postnikov \cite{lampostnikov}, the bounded $L$-convex sets of Murota \cite[\S 5]{murota2003discrete}, the image of Kleene stars in tropical linear algebra \cite{butkovic, BCOQ}.  In particular, they are building blocks for tropical polytopes: any tropical polytope can be decomposed into a union of cells, each is a polytrope \cite{mikebernd}. Each cell has a \emph{type}, and together they define the type of tropical polytope. 
A $d$-dimensional polytrope has exactly one $d$-dimensional cell, namely, its (relative) interior. This is the \emph{basic cell}, and its type is the \emph{tropical type} of the polytrope \cite{JoswigK10}. We use the word `tropical' to distinguish from the ordinary combinatorial type defined by the face poset. As we shall show, tropical type refines ordinary type. 

This work enumerates the tropical types of full-dimensional polytropes in $\mathbb{TP}^{n-1}$. Since polytropes are special tropical simplices \cite[Theorem 7]{JoswigK10} this number is at most the number of regular polyhedral subdivisions of $\Delta_{n-1} \times \Delta_{n-1}$ by \cite[Theorem 1]{mikebernd}. However, this is a very loose bound, the actual number of types of polytropes is much smaller. Joswig and Kulas \cite{JoswigK10} pioneered the explicit computation of types of polytropes in $\mathbb{TP}^2$ and $\mathbb{TP}^3$ using the software \textsf{polymake}. They started from the smallest polytrope, which is a particular ordinary simplex \cite{JoswigK10}, and recursively added more vertices in various tropical halfspaces. Their table of results and beautiful figures have been the source of inspiration for this work. Unfortunately, the published table in \cite{JoswigK10} has errors. For example, there are six, not five, distinct tropical types of full-dimensional polytropes in $\mathbb{TP}^3$ with maximal number of vertices, as discovered by Jim{\'e}nez and de la Puente \cite{jimenez2012six}. We recomputed Joswig and Kulas' result in Table \ref{tab:jk.table}.

In contrast to previous works \cite{JoswigK10, jimenez2012six}, we have a Gr\"{o}bner approach polytropes. In Section \ref{sec:background}, we show that their tropical types are in bijection with a subset of cones in the Gr\"{o}bner fan $\mathcal{GF}_n$ of a certain toric ideal. While this is folklore to experts, the obstacle has been in characterizing these cones. Without such characterizations, brute force enumeration requires one to compute all of $\mathcal{GF}_n$. Unfortunately, even with symmetry taken into account, $\mathcal{GF}_5$ cannot be handled by leading software such as \textsf{gfan} \cite{gfan} on a conventional desktop. 

We show that the full-dimensional polytrope cones in $\mathcal{GF}_n$ are contained in a small cone called the \emph{polytrope region}. Our main result, Theorem \ref{thm:S}, gives an indexing system for the polytrope cones in terms of sets of compatible bipartite binomials and triangles. Geometrically, we show that on the polytrope region, the fan $\mathcal{GF}_n$ equals the refinement of
the fan of linearity of the polytrope map $\mathcal{P}_n$, and the bipartite binomial fan $\mathcal{BB}_n$. The later fan is constructed as a refinement of finitely many fans, each is the coarsening of an arrangement linearly isomorphic to the braid arrangement. The open, full-dimensional cones are in bijection with polytropes in $\mathbb{TP}^{n-1}$ with maximal number of vertices. 
These results elucidate the structure of $\mathcal{GF}_n$ and gives algorithms for polytrope enumeration. Specifically, one can either compute the Gr\"{o}bner fan $\mathcal{GF}_n$ restricted to the polytrope region, or enumerate sets of compatible bipartite binomials and triangles. With these approaches, we computed representatives for all tropical types of full-dimensional polytropes in $\mathbb{TP}^3$ and all maximal polytropes in $\mathbb{TP}^4$. In $\mathbb{TP}^4$, up to permutation by $\mathbb{S}_5$, there are~$27248$ tropical types of maximal polytropes. This is the first result on tropical types of polytropes in dimension~4. \footnote{An earlier version of reported maximal polytropes in $\mathbb{TP}^5$. Unfortunately, in fact, the computation ran out of memory and reported an erroneous number. We thank Michael Joswig and his team for pointing this out. The number of tropical types of polytropes in dimension 5 is still open.}

\textbf{Organizations.} For self-containment, Section \ref{sec:background} reviews the basics of Gr\"{o}bner bases and integer programming, and the three integer programs central to this paper. Section \ref{sec:tropical.polytope} revisits the result of Develin and Sturmfels \cite{mikebernd} on types of tropical polytopes using Gr\"obner bases. We use this view in Section \ref{sec:polytropes} to derive Theorem \ref{thm:ds.analogue}, the analogue of Develin and Sturmfels main results for polytropes. Section \ref{sec:main} contains our main result, Theorem \ref{thm:S} and \ref{thm:main}, on the structure of the polytrope complex. Section~\ref{sec:algorithm} presents algorithms for enumerating full-dimensional polytropes, as well as computation results for $\mathbb{TP}^3$ and $\mathbb{TP}^4$. We conclude with discussions and open problems.

\textbf{Notation.} Throughout this text, for a positive integer $n$, let $[n]$ denote the set $\{1, \ldots, n\}$. We shall identify an $n \times m$ matrix $c$ with the vector $(c_{ij}, i \in [m], j \in [n])$ of length $nm$.
If $c$ is an $n \times n$ matrix with zero diagonal, identify it with the vector $(c_{ij}, i \in [n], j \in [n], j \neq i)$ of length $n^2 - n$. For a cone $\mathcal{C}$, let $\mathcal{C}^\circ$ denote its relative interior, $\partial \mathcal{C}$ denote its boundary. 

\section{Background}\label{sec:background}

This section contains a short exposition on the Gr\"{o}bner approach to integer programming, adapted from \cite[\S 5]{st94}. Another excellent treatment from the viewpoint of applied algebraic geometry is \cite[\S 8]{usingAlgebraicGeometry}, while \cite[\S 9]{triangulationBook} approaches the topic from triangulations of point configurations. 

\subsection{Gr\"obner fan and integer programs}
For $c \in \R^m$, $A \in \R^{n \times m}$ and $b \in \mathbb{Z}^n$, the primal and dual of an integer program are
\begin{align}
\mbox{minimize } & c^\top u  \label{lp.p} \tag{P} \\
\mbox{ subject to } & Au = b, \hspace{1em} u \in \mathbb{N}^N \notag \\
    \text{maximize }   & b^\top y  \label{lp.d} \tag{D} \\
    \text{subject to } & A^\top y \leq c, y \in \mathbb{R}^n. \notag
\end{align}
Consider the polynomial ring $\mathbb{R}[x] = \mathbb{R}[x_1, \ldots, x_m]$. Identify $u \in \mathbb{N}^m$ with the monomial $x^u = \ds\prod_{i \in [m]} x_i^{u_i}$ in $\mathbb{R}[x]$. The \emph{toric ideal of $A$} is 
$$ I = \langle x^u - x^v: Au = Av, u,v \in \mathbb{N}^m\rangle. $$
Let $c \in \R^m$ be a cost vector. The \emph{term ordering} $\succ_c$ is a partial order on the monomials in $\mathbb{R}[x]$, defined as
$$ x^u \succ_c x^v, u \succ_c v \hspace{0.5em} \Leftrightarrow \hspace{0.5em}  c \cdot u > c \cdot v. $$
For a polynomial $f = \sum_u a_u x^u \in \R[x]$, define its initial form $\ini_c(f)$ to be the sum of all terms $a_ux^u$ with maximal order under $\succ_c$. The \emph{initial ideal} of $I$ is the ideal generated by $\ini_c(f)$ for all $f \in I$:
$$ in_c(I) = \langle in_c(f): f \in I \rangle. $$
Monomials of $I$ which do not lie in $in_c(I)$ are called \emph{standard monomials}. 

Now we consider $c \in \R^m$ up to their initial ideals $\ini_c(I)$. Let $\mathcal{C}_c(I) \subseteq \R^m$ be the equivalence class containing $c$
$$ \mathcal{C}_c(I) := \{c' \in \R^m: \ini_{c'}(I) = \ini_c(I)\}. $$
 In general, $\mathcal{C}_c(I)$ may not be a nice set - for example, it may not be convex \cite{fukuda2007computing}. When $c \in \R^m_{> 0}$, $\mathcal{C}_c(I)$ is convex, and its closure is a polyhedral cone \cite{fukuda2007computing}. Following \cite{fukuda2007computing}, define the \emph{Gr\"obner fan of $I$}, to be the collection of closed cones $\overline{\mathcal{C}_c(I)}$ where $c \in \R^{m}_{> 0}$ together with all their non-empty faces. The support of the Gr\"obner fan is called the  Gr\"{o}bner region
$$ \bigcup_{c \in \R^{m}_{> 0}} \overline{\mathcal{C}_c(I)}. $$
If $I$ is homogeneous, then the Gr\"obner region equals $\R^m$ \cite{st94}. If $I$ is not homogeneous, one can homogenize it. Each homogenized version of $I$ is the toric ideal of some matrix $A^h$, called the lift of $A$. This matrix has the form
$$ A^h = \left[ \begin{array}{cc} A & \mathbf{0} \\ \mathbf{1} & \mathbf{1} \end{array} \right], $$
where $\textbf{0}$ is a zero matrix, and $\mathbf{1}$'s are matrices of all ones of appropriate sizes. 

A \emph{Gr\"{o}bner basis of $I$} with term ordering $\succ_c$ is a finite subset $S_c \subset I$ such that $\{\ini_c(g): g \in S_c\}$ generates $\ini_c(I)$. It is called minimal if no polynomial $\ini_c(g)$ is a redundant generator of $\ini_c(I)$. It is called reduced if for any two distinct elements $g, g' \in S_c$, no monomial of $g'$ is divisible by $\ini_c(g)$. A \emph{universal Gr\"obner basis of $I$} is a set $S$ that is a Gr\"obner basis with respect to any term ordering $\succ_c$. 

Throughout this paper we will only be concerned with three integer programs whose matrices $A$ are totally unimodular, that is, every minor of $A$ is either $+1, 0$ or $-1$. Such a matrix has a number of nice properties. In particular, define a circuit of $A$ to be a non-zero primitive vector $u$ in the kernel of $A$ with minimal support with respect to set inclusion. If $A$ is totally unimodular, the set
$$ \{x^{u_+} - x^{u_-}: u \mbox{ is a circuit of} A \} $$
is a universal Gr\"obner basis of $I$ \cite[Theorem 5.9]{rekha}. Furthermore, the Gr\"obner fan of $A$ coincides with the secondary fan of $A$, which is dual to regular subdivisions of the configuration of points that are the columns of $A$. 

\subsection{The transport program}
Throughout this paper, we shall be concerned with the transport program and two of its variants, the all-pairs shortest path, and the homogenized all-pairs shortest path programs. These classic integer programs play central roles in defining and understanding tropical types of polytropes, as we shall discuss in the following sections. 

Fix $c \in \R^{n \times m}$ and $b \in \Z^{n + m}$. With variables $u \in \mathbb{N}^{n \times m}$, $y \in \R^n$, $z \in \R^m$, the transport program is
\begin{align}
\text{minimize }   & \sum_{i\in[n],j\in[m]} u_{ij}c_{ij}  \label{transport.p} \tag{P-transport} \\
\text{subject to } & \sum_{j\in [m]}u_{ij} = b_i, \sum_{i\in[n]} u_{ij} = b_j \hspace{1em} \mbox{ for all } i \in [n], j \in [m]. \notag \\
\text{maximize }   & \sum_{i\in[n]}y_ib_i - \sum_{j\in[m]}z_jb_j  \label{transport.d} \tag{D-transport} \\
\text{subject to } & y_i - z_j \leq c_{ij}, \hspace{1em} \mbox{ for all } i \in [n], j \in [m] \notag
\end{align}

This program defines a transport problem on a directed bipartite graph on $(m,n)$ vertices. Here $c_{ij}$ is the cost to transport an item from $i$ to $j$, $b_i$ is the number of items that node $i$ want to sell, $b_j$ is the number of item that node $j$ want to buy, $u_{ij}$ is the number of items to be sent from $i$ to $j$, and $y_i$, $z_j$ are the per-item prices at each node. The primal goal is to choose a transport plan $u \in \Z^{m + n}$ that minimizes costs and meets the targeted sales $b$. The dual goal is to set prices to maximize profit, subject to the transport cost constraint. 

The toric ideal associated to this program is
\begin{equation}\label{eqn:it}
I_t = \langle x^u - x^v: \sum_j u_{ij} = \sum_j v_{ij}, \sum_iu_{ij} = \sum_iv_{ij} \mbox{ for all } i \in [m], j \in [n]
\rangle.
\end{equation}
Here the subscript $t$ stands for `transport'. This ideal plays a central role in classification of tropical polytopes, as we shall discuss in Section \ref{sec:tropical.polytope}. 

\subsection{The all-pairs shortest path program}

This is the transport program with $m = n$ and $z = -y$, and cost matrix $c \in \R^{n \times n}$ with $c_{ii} = 0$ for all $i \in [n]$. Explicitly, fix such a cost matrix $c$ and constraint vector $b \in \mathbb{Z}^n$. With variables $u \in \mathbb{N}^{n \times n}$ where $u_{ii} = 0$ for all $i \in [n]$, and $y \in \R^n$, the all-pairs shortest path program is 

\begin{align}
\text{minimize }   & \sum_{i,j\in[n]} u_{ij}c_{ij}  \label{shortest.p} \tag{P-shortest} \\
\text{subject to } &  \sum_{j=1}^n u_{ij} - \sum_{j=1^n} u_{ji} = b_i \mbox{ for all } i = 1, \ldots, n. \label{eqn:A} \\
\text{maximize }   & \sum_{i=1}^nb_iy_i \label{shortest.d} \tag{D-shortest} \\
\text{subject to } & y_i - y_j \leq c_{ij}, \hspace{1em} \mbox{ for all } i,j\in [n], i \neq j. \label{eqn:lp.pol}
\end{align}

Here one has a simple directed graph on $n$ nodes with no self loops. As before, $b$ is the targeted sales, $c$ is the cost matrix, $u$ defines a transport plan, $y$ is the price vector. 
Note that in this problem, each node can both receive and send out items. 

The all-pairs shortest path is a basic problem in integer programming. It appears in a variety of applications, one of which is classification of polytropes (cf. Section \ref{sec:polytropes}). We collect some necessary facts about this program below. These properties can be found in \cite[\S 4]{networkBook}. See \cite[\S 3]{BCOQ} and \cite[\S4]{butkovic} for treatments in terms of tropical eigenspaces. 

\subsubsection{Feasible region, lineality space}
This program is feasible only if $\sum_ib_i = 0$ and $c$ has no negative cycles. Let $R_n$ denote the set of feasible cost matrices $c$. Then
\begin{equation} \label{eqn:recessionCone}
R_n = \{c \in \R^{n^2-n}:  c \cdot \chi_\omega \geq 0\}
\end{equation}
where $\chi_\omega$ is the incidence vector of the cycle $\omega$ and $\omega$ ranges over all simple cycles on $n$ nodes. Explicitly, for a cycle $\omega = i_1 \to i_2 \to \ldots \to i_k \to i_1$, 
$$ c_{i_1i_2} + c_{i_2i_3} + \ldots + c_{i_ki_1} \geq 0. $$
The feasible region $R_n$ is a closed cone in $\R^{n^2-n}$. Note that if $c \in R_n$, then $c + c' \in R_n$ for any matrix $c'$ such that $c' \cdot \chi_\omega = 0$ for all cycles $\omega$. One says that the set of such $c'$ forms the lineality space of $R_n$, $\lin(R_n)$
\begin{equation}\label{eqn:vn}
\lin(R_n) = \{c \in \R^{n^2-n}: c \cdot \chi_\omega = 0\}.
\end{equation}
This is an $(n-1)$ dimensional space, consisting of matrices of the form $c_{ij} = s_i - s_j$ for some $s \in \R^n$. This is the space of flows, with gradient vectors $s$. It is also known as the space of strongly consistent matrices in pairwise ranking theory, with $s_i$ interpreted as the score of item $i$ \cite{saari,lekheng}.

\subsubsection{Kleene stars}
To send an item from $i$ to $j$, one can use the path $i \to j$ with cost $c_{ij}$, or the path $i \to k \to j$ with cost $c_{ik} + c_{kj}$, and so on. This shows up in the constraint set (\ref{eqn:lp.pol}): for any triple $i,j,k$, we have $ y_i - y_j = (y_i - y_k) + (y_k - y_j)$, so in addition to $y_i - y_j \leq c_{ij}$, we also have $ y_i - y_j \leq c_{ik} + c_{kj}$, and by induction, $y_i - y_j$ is less than the cost of any path from $i$ to $j$. Thus, the constraint $y_i - y_j \leq c_{ij}$ is tight if and only if $c_{ij}$ is the shortest path from $i$ to $j$. If we assume $c$ has no negative cycle, then the shortest path has finite value. This motivates the following definition. 

\begin{defn}\label{defn:kleene.rn}
For $c \in R_n$, the \emph{Kleene star of $c$} is $c^\ast \in \R^{n^2-n}$ where $c^\ast_{ij}$ is the weight of the shortest path from $i$ to $j$.
\end{defn}

To avoid saying `the constraint set of an all-pairs shortest path dual program with given $c$' all the time, we shall call this set the \emph{polytrope of $c$}. Justification for this terminology comes from Proposition \ref{prop:polytrope.generators} in Section \ref{sec:polytropes}. 

\begin{defn}[Polytrope of a matrix]\label{defn:pol.c}
Let $c \in R_n$. The polytrope of $c$, denoted $Pol(c)$, is the set
\begin{equation}\label{eqn:pol.c}
Pol(c) = \{y \in \R^n: y_i - y_j \leq c_{ij} \forall i,j \in [n], i \neq j\}.
\end{equation}
\end{defn}
As discussed above, one can always replace $c$ by $c^\ast$ in the facet description of the polytrope of $c$ and not change the set. 
\begin{cor}
For $c \in R_n$, $Pol(c) = Pol(c^\ast)$. 
\end{cor}

\begin{defn}
The \emph{polytrope region} is
$$ \mathcal{P}_n = \{c \in R_n: c = c^\ast\} \subset \R^{n^2 - n}. $$
\end{defn}
The polytrope region $\mathcal{P}_n$ is a closed cone in $\R^{n^2 - n}$. It is also known as the set of distance matrices $c \in \R^{n \times n}$, since it can be identified with the set of matrices with zero diagonal that satisfy the triangle inequality
$$ \mathcal{P}_n \cong \{c \in \R^{n \times n}: c_{ii} = 0, c_{ij} \leq c_{ik} + c_{kj} \mbox{ for all } i,j,k \in [n]\}. $$
The map $c \mapsto (c^\ast_{ij}, i,j \in [n])$ is piecewise linear in each entry. Domains where this map is given by a linear functional for each $i,j \in [n]$ form cones of $\R^{n^2-n}$, and altogether they form the fan of linearity of the polytrope map studied in \cite{tran.combi}. Restricted to the polytrope region, this fan is a polyhedral complex, which we shall also denote $\mathcal{P}_n$. 

\subsubsection{Toric ideal}
Let $I_s$ be the toric ideal associated with the all-pairs shortest path program. The subscript $s$ standars for `shortest path'. As before, we suppress the dependence on $n$ in the notation. This ideal can be written explicitly as
$$
I_s = \langle x_{ij}x_{ji} - 1, x_{ij}x_{jk} - x_{ik} \rangle
$$
where the indices range over all distinct $i,j,k \in [n]$. Write the primal all-pairs shortest path program in standard form, and let $A_s$ be the corresponding matrix that defines the constraint set of the primal. Then $A_s$ is totally unimodular \cite{networkBook}. In particular, $I_s$ is generated by binomials $x^{u_+} - x^{u_-}$, where $(u_+,u_-)$ is a circuit of $A_s$. As we shall see in Section \ref{sec:main}, a subset of these circuits are crucial for enumeration of polytropes up to their tropical types.

The Gr\"obner fan of $I_s$ is the central object of study in our paper. We shall write $\mathcal{GF}_n$ for the Gr\"obner fan of $I_s$, emphasizing the dimension. We collect some facts about $\mathcal{GF}_n$

\begin{lem}\label{lem:linspace.gf}
The lineality space of $\mathcal{GF}_n$ is $\lin(R_n)$ defined in (\ref{eqn:vn}).
\end{lem}
\begin{proof}
Let $\mathcal{C}$ be a cone of $\mathcal{GF}_n$. Take $c \in \mathcal{C}$. For $[s_i - s_j] \in \lin(R_n)$, consider $\bar{c} = c + [s_i - s_j]$. That is,
$$ \bar{c}_{ij} = c_{ij} - s_i + s_j. $$
Now, for any cycle $\omega$, $c \cdot \chi_\omega = \bar{c} \cdot \chi_\omega$. Thus for any circuit $(u_+, u_-)$, $c \cdot (u_+ - u_-) = \bar{c} \cdot (u_+ - u_-)$, so $c \cdot u_+ \geq c \cdot u_-$ if and only if $\bar{c} \cdot u_+ \geq \bar{c} \cdot u_-$. Since the program (\ref{lp.p}) is unimodular, the ideal $I_s$ is generated by circuits. Thus, the term orders $\succ_c$ and $\succ_{\bar{c}}$ are equal, so $\bar{c} \in \mathcal{C}$. That is, every cone $\mathcal{C}$ of $\mathcal{GF}_n$ has lineality space $\lin(R_n)$, so $\mathcal{GF}_n$ has lineality space $\lin(R_n)$.
\end{proof}

\begin{lem}\label{lem:grobner.region}
The Gr\"{o}bner region of $\mathcal{GF}_n$ is $R_n$ defined in (\ref{eqn:recessionCone}).
\end{lem}
\begin{proof}
As mentioned, $R_n$ is the feasible region of the integer program (\ref{lp.p}), and thus contains the Gr\"obner region. To show the reverse inclusion, take $c \in R_n$. We need to show that the Gr\"obner cone of $c$ contains a point in the the positive orthant $\mathbb{R}_{\geq 0}^{n^2-n}$. Indeed, let $y \in Pol(c)$. Define $\bar{c}$ via
$$\bar{c}_{ij} = c_{ij} - y_i + y_j.$$
Since $y \in Pol(c)$, $c_{ij} \geq y_i - y_j$, so $\bar{c} \in \mathbb{R}_{\geq 0}^{n^2-n}$. By Lemma \ref{lem:linspace.gf}, $\bar{c}$ belongs to the same cone in $\mathcal{GF}_n(I_s)$ as~$c$. So $\bar{c}$ is the point needed.
\end{proof}


\subsection{The homogenized all-pairs shortest path}
Identify $c \in \R^{n^2-n}$ with its matrix form in $\R^{n \times n}$, where $c_{ii} = 0$ for all $i \in [n]$. So far, we have only defined Kleene stars for $c \in \R^{n \times n}$ with zero-diagonal and non-negative cycles. We now extend the definition of Kleene stars to general matrices $c \in \R^{n \times n}$. This leads to the problem of weighted shortest paths. In the tropical linear algebra literature, one often goes the other way around: first consider the weighted shortest path problem, derive Kleene stars for general matrices $c$, and then restricts to those in $R_n$ (see \cite{SS08, tran.combi, butkovic, AGG09, kambite, BCOQ}). The reverse formulation, from feasible shortest paths to weighted shortest paths, is not so immediate. However, in the language of Gr\"{o}bner bases, this is a very simple and natural operation: making the fan $\mathcal{GF}_n$ complete by homogenizing $I_s$.

Introduce $n$ variables $x_{11},x_{22}, \ldots, x_{nn}$. Consider the following homogenized version of $I_s$ in the ring $\mathbb{R}[x_{ij}:i,j = 1, \ldots, n]$
$$ I_s^h = \langle x_{ij}x_{ji} - x_{ii}x_{jj}, x_{ij}x_{jk} - x_{ik}x_{kk}, x_{ii} - x_{jj} \rangle $$
where the indices range over all distinct $i,j,k \in [n]$. This is the toric ideal of the following program
\begin{align}
\mbox{minimize } & \sum_{i,j \in [n]} c_{ij}u_{ij}  \label{lp.p.prime} \tag{$\mathrm{P^h-shortest}$} \\
\mbox{ subject to } & \sum_{j\neq i, j=1}^n u_{ij} - \sum_{j\neq i, j = 1}^n u_{ji} = b_i \mbox{ for all } i = 1, \ldots, n. \notag \\
& \sum_{i=1}^n\sum_{j=1}^n u_{ij} = b_{n+1}. \notag
\end{align}
Compared to (\ref{shortest.d}), the dual program of (\ref{lp.p.prime}) has one extra variable. It is helpful to keep track of this variable separately. Let $\lambda \in \R$. Write $b^\top = (b_1 \, \ldots \, b_n)$.  The dual program to (\ref{lp.p.prime}) is the following. 
\begin{align}
    \text{maximize }   & b^\top y + b_{n+1}\lambda  \label{lp.d.prime} \tag{$\mathrm{D^h-shortest}$} \\
    \text{subject to } & y_i - y_j - \lambda \leq c_{ij} \mbox{ for all } i,j \in [n] \notag \\
& \lambda \geq c_{ii} \mbox{ for all } i \in [n]. \notag
\end{align}

In fact, $\lambda$ and $y$ can be solved separately. For example, by adding the constraints involving $c_{ij}$ and $c_{ji}$, we obtain a constraint only in $\lambda$
$$ (y_i - y_j) - \lambda + (y_j - y_i) - \lambda \leq c_{ij} + c_{ji}, \Leftrightarrow \lambda \geq \frac{c_{ij} + c_{ji}}{2}. $$
More systematically, set $b$ to be the all-zero vector, $b_{n+1} = 1$, and view the primal program as a linear program over $\mathbb{Q}$. (We can always do this, as there are finitely many decision variables). Then the dual program (\ref{lp.d.prime}) has optimal value $\lambda$. The corresponding primal program becomes
\begin{equation}\label{lp.lambda}
\begin{matrix}
& {\rm Minimize} \,\, \sum_{i,j=1}^n c_{ij} u_{ij}
\,\, \,\, \hbox{subject to} \,\,\,\, u_{ij} \geq 0
\,\,\hbox{ for } 1 \leq i,j \leq n , \\ &
 \sum_{i,j=1}^n  u_{ij} = 1  \,\,\,\, \hbox{and}\,\,\,
\sum_{j=1}^n u_{ij} = \sum_{k=1}^n u_{ki}
\,\,\hbox{ for all }\, 1 \leq i \leq n.
\end{matrix}
\end{equation}

This program first appeared in \cite{Cg62}. The constraints require $(u_{ij})$ to be a probability distribution on the edges of the graph of $c$ that represents a flow. The set of feasible solutions is a convex polytope called the \emph{normalized cycle polytope}. Its vertices are the uniform probability distributions on directed cycles. 
By strong duality, $\lambda$ is precisely the value of the minimum normalized cycle in the graph weighted by $c$. Plugging in this value for $\lambda$, we find that (\ref{lp.p.prime}) is the original all-pairs shortest path problem with new constraints $c'_{ij} = c_{ij} - \lambda$, $c'_{ii} = 0$ for all $i,j \in [n]$. This tells us how to define the Kleene star of $c$. 

\begin{defn}\label{defn:kleene.general}
Let $c \in \R^{n \times n}$. Let $\lambda(c)$ be the value of the minimum normalized cycle in the graph weighted by $c$. Define $c' \in \R^{n \times n}$ via $c'_{ij} = c_{ij} - \lambda(c)$, $c'_{ii} = 0$. The Kleene star of $c$, denoted $c^\ast$, is the $n \times n$ matrix such that $c^\ast_{ij}$ is the shortest path from $i$ to $j$ in the graph with edge weights $c'$.
\end{defn}

This definition reduces to the Kleene star in Definition \ref{defn:kleene.rn} when $c \in R_n$, so in this sense it is an extension of Definition \ref{defn:kleene.rn} to general $n \times n$ matrices. The value $\lambda(c)$ is the tropical eigenvalue of the matrix $c$, and the polytope defined as the constraint set of (\ref{lp.d}) with $(c')^\ast$ is called the tropical eigenspace of $c$. As the names suggested, these objects play important roles in the spectral theory of tropical matrices, see the monographs \cite{butkovic, BCOQ} for key results in this field.



\section{Tropical polytopes and their types}\label{sec:tropical.polytope}

In this section we define tropical polytopes, and review the main theorem of \cite{mikebernd} in terms of the transport problem. Say that a set $P \subset \R^n$ is closed under scalar tropical multiplication if $x \in P$ implies $\lambda \odot x = (\lambda + x_1, \ldots, \lambda + x_n) \in P$ for all $\lambda \in \R$. Such a set can also be regarded as a subset of $\mathbb{TP}^{n-1}$. We will often identify $\mathbb{TP}^{n-1}$ with $\R^{n-1}$. Say that $P \subset \mathbb{TP}^{n-1}$ is a classical polytope if it is a polytope in $\R^{n-1}$ under this identification.
A tropical polytope in $\R^n$ is the tropical convex hull of $m$ points $c_1, \ldots, c_m \in \R^n$
\begin{align*}
\text{tconv}(c_1, \ldots, c_m) &= \{z_1\odot c_1 \oplus \ldots \oplus z_m\odot c_m: z_1,\ldots,z_n \in \R\} \\
&= \{\min(z_1+c_1, \ldots, z_m+c_m): z_1,\ldots,z_m \in \R\}.
\end{align*}
For $c$ an $n \times m$ matrix with columns $c_1, \ldots, c_m$, we will write $\text{tconv}(c)$ for
$\tconv(c_1, \ldots, c_m)$. Rewritten in the tropical algebra, $\text{tconv}(c)$ is the image set of the matrix $c$.
\begin{equation}\label{eqn:tconv.c}
\text{tconv}(c) = \text{tconv}(c_1, \ldots, c_m) = \{y \in \R^n: y = c\odot z \mbox{ for some } z \in \R^m \}.
\end{equation}
 Note that a tropical polytope $\tconv(c)$ is closed under scalar tropical multiplication, that is, $\tconv(c) \subseteq \mathbb{TP}^{n-1}$. 

Develin and Sturmfels \cite{mikebernd} pioneered the investigation on tropical polytopes. They showed \cite[Lemma 22]{mikebernd} that $\tconv(c)$ is a union of bounded cells. In particular, let $Q_c$ be the constraint set of the dual transport program (\ref{transport.d})
$$ Q_c = \{(y,z) : y_i - z_j \leq c_{ij}, i \in [n], j \in [m] \}. $$
Then each cell of $\tconv(c)$ is the projection onto the $y$ coordinate of a bounded face of~$Q_c$. Such a cell has the form
$$ \{y \in \R^n: y_i = c_{ij} + z_j \mbox{ if and only if } S_{ij} = 1, i \in [n], j \in [m]\}$$
for some matrix $S \in \{0,1\}^{n \times m}$, called its \emph{type}. 
\begin{defn}
The type of a tropical polytope $\tconv(c)$ is the set of types of its cells.
\end{defn}

The most effective way to understand cell types of tropical polytopes is via the transport program.
\begin{prop}[\cite{mikebernd}, Lemma 22]\label{prop:mikebernd.grobner}
The tropical polytopes $\tconv(c)$ and $\tconv(c')$ have the same tropical types if and only if $\ini_c(I_t) = \ini_{c'}(I_t)$, where $I_t$ is the transport ideal defined in (\ref{eqn:it}).
\end{prop}

It is worth sketching the idea. The key is to realize that if a bounded face of $Q_c$ is supported by some vector $b$, then the type of the corresponding cell determines the set of optimal transport plans for (\ref{transport.p}) with cost $c$ and constraint $b$, and vice-versa. So $\tconv(c)$ and $\tconv(c')$ have the same tropical type if and only if for each constraint $b \in \Z^{m+n}$, the programs (\ref{transport.p}) with cost $c$ and constraint $b$, and (\ref{transport.p}) with cost $c'$ and constraint $b$ have the same set of optimal transport plans. Now we look at the ideal. Each binomial generator $x^u - x^v$ of $I_t$ is a pair of competing transport plans $(u,v)$ subjected to the same constraint $b_i = \sum_ju_{ij} = \sum_jv_{ij}, b_j = \sum_iu_{ij} = \sum_iv_{ij}$, for some $b \in \mathbb{Z}^n$. Therefore, each polynomial in $I$ consists of at least two monomials, corresponding to competing transport plans. The partial order $\succ_c$ compares plans: if $u \succ_c v$, then $u$ is a strictly worse plan than $v$. Under the transport cost $c$, $\ini_c(I_t)$ is the `ideal of bad plans': if the monomial $x^u \in \ini_c(I_t)$, then $u$ cannot be the optimal plan. Note, however, that $\succ_c$ is only a partial order. So if there are two optimal plans $u,v$ for some constraint $b$, then $x^u - x^v \in \ini_c(I_t)$. The converse is also true: if $x^u - x^v \in \ini_c(I_t)$ but $x^u, x^v \notin \ini_c(I_t)$, then $u$ and $v$ must be two optimal plans. Thus, if $\ini_c(I_t) = \ini_{c'}(I_t)$, then all bad transport plans under the cost matrix $c$ are exactly the same as those under $c'$, and hence all the optimal plans under $c$ and $c'$ agree. So $\ini_c(I_t) = \ini_{c'}(I_t)$ if and only if for each constraint $b \in \Z^{m+n}$, the programs (\ref{transport.p}) with cost $c$ and constraint $b$, and (\ref{transport.p}) with cost $c'$ and constraint $b$ have the same set of optimal transport plans. This is conclusion needed.

The linear program (\ref{transport.p}) is totally unimodular, so the Gr\"obner fan equals the secondary fan of $I_t$. The secondary fan is in bijection with regular subdivision of the point configuration that defines the constraint set of (\ref{transport.p}). In the case of the transport program, this is a product of simplices. So Proposition \ref{prop:mikebernd.grobner} implies the following main theorem of \cite{mikebernd}. 

\begin{thm}[\cite{mikebernd}]\label{thm:ds}
Tropical types of tropical polytopes generated by $m$ points in $\R^n$ are in bijection with regular subdivisions of the product of two simplices $\Delta_{m-1} \times \Delta_{n-1}$.
\end{thm}



\section{Polytropes and their types}\label{sec:polytropes}

\begin{defn}
A set $P \subset \mathbb{TP}^{n-1}$ is a \emph{polytrope} if $P$ is a tropical polytope and also an ordinary polytope in $\mathbb{TP}^{n-1}$. 
\end{defn}

\begin{defn}
The \emph{dimension} of a polytrope $P$ is the dimension of the smallest affine subspace containing it. Say that $P \subset \mathbb{TP}^{n-1}$ is \emph{full-dimensional} if its dimension is $n-1$. 
\end{defn}

Polytropes have appeared in a variety of contexts. The following classical result states that a polytrope is the constraint set of an all-pairs shorest path dual program~(\ref{lp.d}).
It allows one to write a polytrope $P$ as $Pol(c)$, the polytrope of some unique matrix $c \in \mathcal{P}_n$ $P = Pol(c)$. This justifies why we call $Pol(c)$ the polytrope of $c$ in Definition \ref{defn:pol.c}.

\begin{prop}[\cite{mikebernd, butkovic}]\label{prop:polytrope.generators}
Let $P \subset \mathbb{TP}^{n-1}$ be a non-empty set. The following are equivalent.
\begin{itemize}
  \item $P$ is a polytrope.
  \item There is a unique $c \in \mathcal{P}_n$ such that $P = Pol(c)$, as defined in (\ref{eqn:pol.c}).
  \item There is a unique $c \in \mathcal{P}_n$ such that $P = \tconv(c)$, as defined in (\ref{eqn:tconv.c}).
\end{itemize}
Furthermore, the $c$ in the last two statements are equal.
\end{prop}

Note that we have defined a polytrope $P$ as a set. This creates ambiguity when one speaks of the type of $P$ as a tropical polytope, since the type depends on the choice of generators \emph{and} their orderings. By \cite[Proposition 21]{mikebernd}, every tropical polytope has a unique minimal generating set. A classical result in tropical linear algebra \cite{BCOQ, butkovic} states that a polytrope $P = \tconv(c)$ of dimension $k$ has exactly $k$ minimal tropical generators. Furthermore, they are $k$ columns of $c$, while each of the other $n-k$ columns are tropical multiples of one of these. Thus, it is natural to take the unique columns of $c$ as \emph{the} ordered set of tropical generators of $P$.  

\begin{defn}
Consider a polytrope $Pol(c)$ in $\mathbb{TP}^{n-1}$. Suppose that $c$ has $k$ unique columns $c_{i_1}, \ldots, c_{i_k}$, for $1 \leq i_1 < i_2 < \ldots < i_k \leq n$, $k \in [n]$. The tropical type of a polytrope is its type as $\tconv(c_{i_1}, \ldots, c_{i_k})$. 
\end{defn}

The goal of this paper is to classify polytropes up to their tropical types. By Proposition~\ref{prop:polytrope.generators}, these tropical types are tied to the shortest path ideal $I_s$. A consequence of Proposition \ref{prop:mikebernd.grobner} is the following. 

\begin{prop}\label{prop:polytrope.is}
Consider polytropes $Pol(c)$, $Pol(c')$ in $\mathbb{TP}^{n-1}$. Then they have the same tropical type if and only if $\ini_c(I_s) = \ini_{c'}(I_s)$. 
\end{prop}
\begin{proof}
By Proposition \ref{prop:polytrope.generators}, $Pol(c) = tconv(c) = \{y: y = c \odot z \mbox{ for some } z \in \R^n\}$. Since $c \in \mathcal{P}_n$, $c = c \odot c$, and $c_{ii} = 0$ for all $i \in [n]$. So in particular, we can take $z = y$, so $\ini_c(I_s) = \ini_c(I_t)$. Thus, by Proposition \ref{prop:mikebernd.grobner}, $Pol(c)$ and $Pol(c')$ have the same tropical type if and only if $\ini_c(I_s) = \ini_{c'}(I_s)$.
\end{proof}

As mentioned above, a polytrope of dimension $k$ has exactly $k$ minimal generators. So for $k < n$, a polytrope of dimension $k$ in $\mathbb{TP}^{n-1}$ is just a full-dimensional polytrope of $\mathbb{TP}^k$ embedded into $\mathbb{TP}^{n-1}$. Thus, we shall restrict our study to full-dimensional polytropes. A classical result \cite{BCOQ} states that for $c \in \mathcal{P}_n$, the $i$-th column $c_i$ is a tropical scalar multiple of the $j$-th column $c_j$ if and only if there exists a cycle of value zero going through $i$ and $j$. In particular, columns of $c_i$'s are distinct if and only if there are no zero cycles involving two nodes or more. In other words,

\begin{lem}\label{lem:full.dim}
A polytrope $Pol(c)$ is full-dimensional if and only if $c \in \mathcal{P}_n \cap R_n^\circ$. 
\end{lem}

Call the restriction of $\mathcal{GF}_n$ to the polytrope region $\mathcal{P}_n$ the \emph{polytrope complex} $\GFP$,
$$ \GFP = \bigcup_{c \in \R^{n^2-n}_{> 0} \cap \mathcal{P}_n} \overline{\mathcal{C}_c(I_s)}. $$
Note that by Lemma \ref{lem:grobner.region}, one has
$$ \GFP = \bigcup_{c \in \mathcal{P}_n} \overline{C_c(I_s)}.$$

\begin{thm}\label{thm:ds.analogue}
Cones of $\GFP$ are in bijection with tropical types of polytropes in $\mathbb{TP}^{n-1}$. Furthermore, those cones of $\GFP$ in $R_n^\circ$ are in bijection with tropical types of full-dimensional polytropes in $\mathbb{TP}^{n-1}$. 
\end{thm}
\begin{proof}
By Proposition \ref{prop:polytrope.generators}, polytropes are tropical polytopes whose matrix of generators $c \in \mathcal{P}_n$. By Proposition \ref{prop:polytrope.is}, the types of such tropical polytopes are in bijection with cones of $\GFP$. This proves the first statement. Lemma \ref{lem:full.dim} proves the second. 
\end{proof}

From Theorem \ref{thm:ds.analogue}, enumerating tropical types of polytropes equals enumerating cones of $\GFP$. This is a much smaller polyhedral complex compared to $\mathcal{GF}_n$. 
We conclude this section with an interpretation for the open cones of $\GFP$. As an ordinary polytope, a full-dimensional polytrope in $\mathbb{TP}^{n-1}$ has betwen $n$ and $\binom{2n-2}{n-1}$ vertices. A polytrope $Pol(c)$ in $\TP^{n-1}$ is \emph{maximal} if it has $\binom{2n-2}{n-1}$ vertices as an ordinary polytope. 

\begin{lem}\label{lem:maximal}
The polytrope $Pol(c)$ is maximal if and only if $\mathcal{C}_c$ is an open cone of $\GFP$. In other words, open cones of $\GFP$ are in bijection with maximal polytropes in $\mathbb{TP}^{n-1}$. 
\end{lem}
\begin{proof}
From \cite[Corollary 25]{mikebernd}, $tconv(c)$ has the maximal number of vertices of $\binom{2n-2}{n-1}$ if and only if the Gr\"obner cone of $c$ defined with respect to the ideal $I_t$ is open. But for $c \in \mathcal{P}_n$, $tconv(c) = Pol(c)$, $\ini_c(I_s) = \ini_c(I_t)$. This means the Gr\"obner cone of $c$ defined with respect to $I_t$ coincides with that defined with respect to $I_s$. This proves the lemma. 
\end{proof}

\section{The Polytrope Complex}\label{sec:main}

With Theorem \ref{thm:ds.analogue}, one can use a Gr\"obner fan computation software such as \textsf{gfan} \cite{gfan} to enumerate polytropes. However, this does not necessarily elucidate the combinatorial structure of tropical types of polytropes. In this section we state and prove our main results on the structure of the polytrope complex $\GFP$, Theorem \ref{thm:S} and \ref{thm:main}. 
These state that $\GFP$ equals the refinement of the polyhedral complex $\mathcal{P}_n$ by the \emph{bipartite binomial} fan $\mathcal{BB}_n$. In particular, the open cones of $\GFP$, which are in bijection to maximal polytropes by Lemma \ref{lem:maximal}, are indexed by inequalities amongst bipartite binomials. As an example, we use this fact to compute the six types of maximal polytropes for $n = 4$ by hand.  

\subsection{The Polytrope Gr\"obner Basis}

\begin{defn}
The \emph{polytrope Gr\"obner basis} $PGB$ is the union of minimal reduced Gr\"obner bases over the cones of $\GFP$. 
\end{defn}

The polytrope Gr\"obner basis plays the role of the universal Gr\"{o}bner basis for the polytrope region, in the sense that it is a Gr\"obner basis with respect to any term ordering~$\succ_c$ for $c \in \mathcal{P}_n$. The minimal condition means that elements of PGB are not redundant. That is, for each $f \in PGB$, there exists a cone $\mathcal{C}_c$ in $\GFP$ such that $\ini_c(f)$ is not a redundant generator of $\ini_c(I_s)$. The reduced condition implies that terms in the polytrope Gr\"obner basis are of the form $x^{u_+} - x^{u_-}$, where $u$ is a circuit of $A$. We claim that these terms fall into either one of the following categories: triangle and bipartite.

\begin{defn}[Bipartite monomials and binomials] \label{defn:m.bipartite}
For an integer $m \geq 2$, let $\mathbb{S}_m$ be the set of permutations on $m$ letters, $\Sigma_m \subset \mathbb{S}_m$ be the set of cyclic permutations. Let $K = (k_1 \leq k_2 \leq \ldots \leq k_m)$, $L = (l_1 \leq l_2 \leq \ldots \leq l_m) \subset [n]$ be two sequences of $m$ indices, not necessarily distinct, such that $K \cap L = \emptyset$.
For $\sigma \in \mathbb{S}_m$, $\tau \in \Sigma_m$, define
\begin{equation}\label{eqn:m.bipartite.binomial} 
    u_+ := k_1\to \sigma(l_1), \ldots, k_m \to \sigma(l_m), \hspace{1em} u_- := k_1 \to (\tau\circ\sigma)(l_1), \ldots, k_m \to (\tau\circ\sigma)(l_m).
\end{equation} 
If $(K,\sigma,\tau,L)$ is such that $(u_+,u_-)$ defined above is a circuit of $A_s$, say that $(u_+,u_-)$ is a bipartite binomial, and $u_+$, $u_-$ are bipartite monomials. 
\end{defn}
\begin{ex}\label{ex:n4.maximal} For $n = 4$, there are twelve bipartite monomials and six bipartite binomials. Figure \ref{fig:six} shows the six bipartite binomials, identified with the graphs of $u_+$ and $u_-$.
\vskip-0.5em
\begin{figure}[h]
\includegraphics[width=\textwidth]{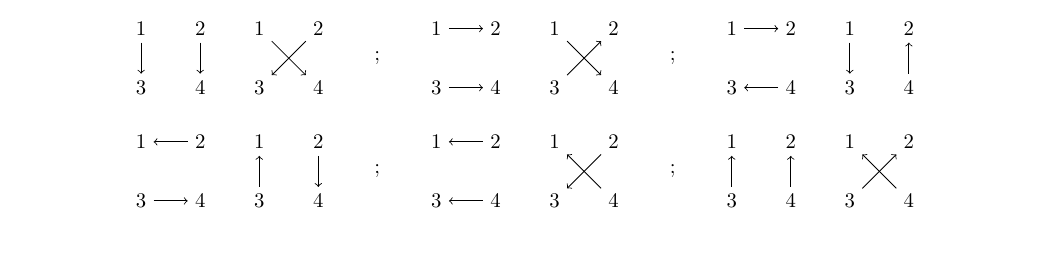}
\vskip-1em
\caption{The six bipartite binomials for $n = 4$.} \label{fig:six}
\end{figure}
\end{ex}


\begin{cor}
There are finitely many bipartite binomials. 
\end{cor}
\begin{proof}
The bipartite binomials is a subset of the set of circuits of $A_s$, which is a matrix of dimension $n \times (n^2-n)$. So there are at most $\binom{n^2-n}{n}$ many circuits. 
\end{proof}

\begin{prop}\label{prop:ugb.polytrope}
The polytrope Gr\"obner basis is the set of binomials of the form $x^{u_+} - x^{u_-}$, where the pair $(u_+,u_-)$, identified with their graphs, ranges over the following sets:
\begin{itemize}
    \item \emph{Triangles:} $u_+ = i \to k \to j$, $u_- = i \to j$ for all distinct $i,k,j \in [n]$.
    \item \emph{Bipartite:} $(u_+,u_-)$ is a circuit of the form (\ref{eqn:m.bipartite.binomial}) for some $(K,\sigma,\tau,L)$ in Definition~\ref{defn:m.bipartite}. 
\end{itemize}
\end{prop}
\begin{proof}
Let $(u_+,u_-)$ be a circuit of $A_s$. Then $x^{u_+} - x^{u_-}$ is in the polytrope Gr\"obner basis if and only if for some $c \in \mathcal{P}_n$, either $u_+$ or $u_-$ is the optimal transport plan with cost $c$ subject to the net outflow constraint at each node (the Gr\"obner condtion), and that the optimality of these plans is not implied by other terms in the polytrope Gr\"obner basis (the minimality condition). 
First we show that our candidate set of PGB indeed consists of polynomials in the Gr\"obner basis, and that they are not redundant. For each pair $i,j \in [n]$, $i \to j$ is the shortest path from $i$ to $j$ on $\mathcal{P}_n$. Furthermore, for each $k \in [n], k \neq i,j$, there is a face of $\mathcal{P}_n$ defined by $c_{ij} = c_{ik} + c_{kj}$. Thus, the triangle terms are in the PGB. 
Now consider a bipartite binomial $x^{u_+} - x^{u_-}$. Define $c \in \R^{n^2 - n}$ via 
$$ c_{ij} = \left\{\begin{array}{ccc} 1 &\mbox{ if } & i \to j \notin u_+, i \to j \notin u_- \\
0 & \mbox{ else } & 
\end{array} \right.  $$
for all $i,j \in [n], i \neq j$. Then $c \in \mathcal{P}_n$, and $x^{u_+} - x^{u_-}$ is a non-redundant generator of $\ini_c(I_s)$. So the bipartite binomials are also contained in the PGB. 

Now we claim that given the triangles and bipartite binomials, any other circuit must be redundant. Let $(u_+,u_-)$ be a circuit of $A_s$. Since $(u_+,u_-)$ is in the kernel of $A_s$, each node in the graph of $u_+$ and $u_-$ must have the same net outflow. This partitions the support of $u_+$ and $u_-$ into three sets: the sources (those with positive net outflow), the sinks (those with negative net outflow), and the transits (those with zero net outflow). We now consider all possible outflow constraints. 

\begin{itemize}
  \item \textbf{One sink, one source}. Suppose there is exactly one source $i$ and one sink $j$.
This means $u_+, u_-$ are paths from $i$ to $j$. Consider further subcases based on the length of the paths $u_+, u_-$.
\begin{itemize}
  \item $u_-$ is $i \to j$, and $u_+$ is $i \to k \to j$. Then $(u_+,u_-)$ is a triangle term. 
  \item $u_-$ is $i \to k \to j$, and $u_+$ is $i \to k' \to j$. Since $i \to j$ must be a shortest path on $\mathcal{P}_n$, this means $(u_+,u_-)$ is made redundant by the triangles $(u_+,i\to j)$ and $(u_-,i\to j)$.
  \item Either $u_+$ or $u_-$ is of the form $i = i_0 \to i_1 \to \ldots \to i_{m-1} \to i_m = j$ for $m \geq 3$. Then it is a shortest path if and only if $i_r \to i_{r+1} \to i_{r+2}$ is a shortest path from $i_r$ to $i_{r+2}$ for all $r = 0, \ldots, m-2$. Thus, $(u_+,u_-)$ is made redundant by the triangles $(i_r \to i_{r+1} \to i_{r+2}, i_r \to i_{r+2})$ for $r = 0, \ldots, m-2$.
\end{itemize}
  \item \textbf{One source or one sink}.
Suppose there are $s \geq 2$ sinks, $1$ source. Since the constraints are integral, one can decompose any transport plan as the union of $s$ plans, one for each sink-source pair. So this reduces to the one sink one source case. The same reduction applies when there are $s \geq 2$ sources, $1$ sink. 
  \item \textbf{More than one sources and sinks}.
Suppose there are more than one sources and sinks. Let $(u_+,u_-)$ be a circuit of $A_s$ satisfying the constraint on the number of sources and sinks. Consider the following cases.
\begin{itemize}
  \item Either $u_+$ or $u_-$ contain a path $i \to j \to \ldots \to k$ of length at least two. One can replace it with the path $i \to k$ to form $u'$. Then the new binomial $(u_+,u')$ (or $(u',u_-)$) is a circuit of $A_s$, and it makes $(u_+,u_-)$ redundant. 
  \item All paths in $u_+$ and $u_-$ are of length 1, that is, each $u_+$ and $u_-$ is a bipartite graph. Since $(u_+,u_-)$ is in the kernel of $A_s$, the graphs of $u_+$ and $u_-$ must have the same number of edges, say, $m$ edges, for $m \geq 2$. Thus, we can write $u_+ = (K,\sigma,L)$, and $u_- = (K,\sigma',L)$ for $\sigma,\sigma' \in \mathbb{S}_m$, $K \cap L = \emptyset$, where $K$ and $L$ may have repeated indices. Write $\sigma' = \tau \circ \sigma$ for some $\tau \in \mathbb{S}_m$. Now we consider further subcases.
  \begin{itemize}
    \item $\tau$ has one cycle, that is, it is a cyclic permutation. Then $(u_+,u_-)$ is a bipartite binomial. 
    \item $\tau$ has more than one cycle. Then the induced bipartite pair $(u_+',u_-')$ on each cycle is another bipartite binomial with strictly smaller support. This contradicts the fact that $(u_+,u_-)$ is a circuit. 
  \end{itemize}
\end{itemize}
\end{itemize}
\end{proof}

\begin{defn}
For a set $S$ of triangle and bipartite monomials, define the cone $\mathcal{C}_S \subset \mathcal{P}_n$ as follows. For $c \in \mathcal{C}_S$, for each bipartite monomial $(K,\sigma,L) \in S$ with $|K|=|L|=m$, 
\begin{equation}\label{eqn:cone.from.plan1}
c_{k_1\sigma(l_1)} + \ldots + _{k_m\sigma(l_m)} < c_{k_1\tau(l_1)} + \ldots + c_{k_m\tau(l_m)} \mbox{ for all } \tau \in \mathbb{S}_m, \tau \neq \sigma,
\end{equation}
for each triangle monomial $i \to j \to k \in S$,
\begin{equation}\label{eqn:cone.from.plan2}
c_{ij} + c_{jk} = c_{ik},
\end{equation}
and for all distinct triples $i,j,k \in [n]$, $c_{ij} + c_{jk} \geq c_{ik}$. Say that $S$ is compatible if $\mathcal{C}_S \neq \emptyset$. 
\end{defn}

\begin{thm}\label{thm:S}
The map $S \mapsto \mathcal{C}_S$ is a bijection between compatible sets of triangle and bipartite monomials and cones of $\GFP$. 
\end{thm}
\begin{proof}
A monomial $x^u$ is not in $\ini_c(I_s)$ if and only if $u$ is an optimal transport plan amongst those with the same sources and sinks. By Proposition~\ref{prop:ugb.polytrope}, each relatively open cone $\mathcal{C}_c$ of $\GFP$ is defined by a unique set $S$ of triangle and bipartite monomials which are optimal transport plans amongst those with the same sinks and sources.
The optimal of terms in $S$ is expressed in (\ref{eqn:cone.from.plan1}) and (\ref{eqn:cone.from.plan2}). Therefore, $\mathcal{C}_c = \mathcal{C}_S$. Conversely, if $S$ is compatible, then any $c \in \mathcal{C}_S$ induces the same ordering on the binomials in the PGB, and so $\mathcal{C}_S$ is a non-empty cone of $\GFP$. This establishes the bijection claimed.
\end{proof}

\subsection{The fan structure of $\GFP$}
In this section, we translate the results of the previous section into a statement about the geometry of the polyhedral complex $\GFP$. Fix sources $K$, sinks $L$, with $|K| = |L| = m \geq 2$. Associate with each pair $\sigma,\tau \in \mathbb{S}_m$, $\sigma \neq \tau$ a hyperplane in $\R^{n^2-n}$ whose normal vector is $u_+ - u_-$ for $u_+, u_-$ defined as
$$  u_+ := k_1\to \sigma(l_1), \ldots, k_m \to \sigma(l_m), \hspace{1em} u_- := k_1 \to \tau(l_1), \ldots, k_m \to \tau(l_m). $$
Let $\mathcal{AB}_n(K,L)$ denote the arrangement of all hyperplanes ranging over all such pairs~$\sigma,\tau$. Note that each chamber of $\mathcal{AB}_n(K,L)$ defines a linear ordering on the $m!$ elements of $\mathbb{S}_m$. Say that two such linear orders are equivalent if they have the same minimum. This induces an equivalence relation $\sim_{\min}$ on the chambers of $\mathcal{AB}_n(K,L)$. Let $\mathcal{BB}_n(K,L)$ be the polyhedral complex obtained by removing faces between adjacent cones which are equivalent under $\sim_{\min}$. Then $\mathcal{BB}_n$ has at most $m!$ full-dimensional cones, indexed by the permutation $\sigma \in \mathbb{S}_m$ that achieves the minimum order amongst the $m!$ elements of $\mathbb{S}_m$. That is, the cone corresponds to $\sigma \in \mathbb{S}_m$ is defined by~(\ref{eqn:cone.from.plan1}). By construction, one can check that $\mathcal{BB}_n(K,L)$ is a fan coarsening of $\mathcal{AB}_n(K,L)$. 

\begin{defn}
The \emph{bipartite binomial fan $\mathcal{BB}_n$} is the refinement of the fans $\mathcal{BB}_n(K,L)$, and the \emph{bipartite binomial arrangement $\mathcal{AB}_n$} is the refinement of the arrangements $\mathcal{AB}_n(K,L)$, over all pairs of sources and sinks $(K,L)$ such that there exists some bipartite monomial with these sources and sinks. 
\end{defn}

The name `bipartite binomial arrangement' stems on the fact that $\mathcal{AB}_n$ is an arrangement of bipartite binomials which appear in the polytrope universal basis. Since bipartite binomials are a subset of the set of circuits of $A_s$, $\mathcal{AB}_n$ is a coarsening of the circuit arrangement of $A_s$ studied in \cite{st94}. 

\begin{ex} For $n = 4$ and $n=5$, $\mathcal{BB}_n = \mathcal{AB}_n$, and this is the arrangement of hyperplanes
$$\{ c \in \R^{n^2-n}: c_{ik} + c_{jl} - c_{il} - c_{jk} = 0\} $$
for each tuple of distinct indices $i,j,k,l \subset [n]$. 
\end{ex}

\begin{ex}
Suppose $K = (1,2,3)$, $L = (4,5,6)$. There are $3! = 6$ bipartite monomials with sources $K$ and sinks $L$, shown in Figure \ref{fig:ab} below.
\begin{figure}[h]
\includegraphics[width=\textwidth]{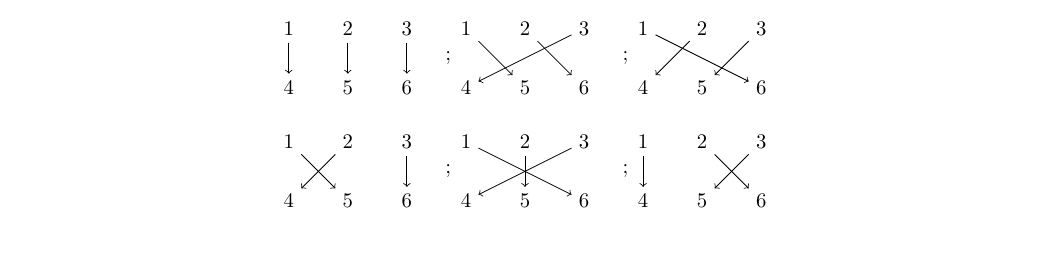}
\vskip-1em
\caption{The six bipartite monomials with sources $(1,2,3)$ and sinks $(4,5,6)$.} \label{fig:ab}
\end{figure}

Each pair of monomials generate a hyperplane. For example, the pair of top left monomials defines the hyperplane 
$$c_{14} + c_{25} + c_{36} - (c_{15} + c_{26} + c_{34}) = 0. $$
The arrangement $\mathcal{AB}_n(K,L)$ is generated by the $\binom{3!}{2} = 15$ hyperplanes from these pairs. In comparison, $\mathcal{BB}_n(K,L)$ has $3! = 6$ full-dimensional cones, each given by 5 inequalities. For instance, the cone indexed by the first monomial in Figure \ref{fig:ab} is given by
\begin{align*}
c_{14} + c_{25} + c_{36} 
&< c_{15} + c_{26} + c_{34}, c_{16} + c_{24} + c_{35}, c_{14} + c_{26} + c_{35}, c_{16} + c_{25} + c_{34}, c_{15} + c_{24} + c_{36}. 
\end{align*}
\end{ex}

\begin{thm}\label{thm:main}
The Gr\"obner fan on the polytrope region $\GFP$ equals the refinement of the polyhedral complex $\mathcal{P}_n$ by $\mathcal{BB}_n$.
\end{thm}
\begin{proof}
By the discussion succeeding Proposition~\ref{prop:ugb.polytrope}, cones of $\GFP$ are in bijection with cones $\mathcal{C}_S$ indexed by compatible sets $S$ of triangle and bipartite monomials. By construction, the cones of $\mathcal{P}_n$ are in bijection with all compatible sets of triangle monomials, and the cones of $\mathcal{BB}_n$ \emph{over $\R^{n^2 - n}$} are in bijection with all compatible sets of bipartite monomials. Thus, the conclusion would follow if we can show that every cone of $\mathcal{BB}_n$ has non-empty intersection with $\mathcal{P}_n^\circ$. Since $\mathcal{BB}_n$ is the fan coarsening of $\mathcal{AB}_n$, it is sufficient to show that every cone of $\mathcal{AB}_n$ has non-empty intersection with $\mathcal{P}_n^\circ$. The lineality space of $\mathcal{AB}_n$ is
$$ \lin(R_n) + \mathsf{span}(1, \ldots, 1),  $$
where $\lin(R_n)$ is defined in (\ref{eqn:vn}). Over $\R^{n^2-n} \backslash \lin(R_n)$, $\mathcal{P}_n$ is a pointed cone containing the ray $(1, \ldots, 1)$ in its interior. Let us further modulo the span of this ray. Then $\mathcal{AB}_n \backslash (\lin(R_n) + \mathsf{span}(1, \ldots, 1))$ is a central hyperplane arrangement, and $\mathcal{P}_n^\circ \backslash (\lin(R_n) + \mathsf{span}(1, \ldots, 1))$ is an open neighborhood around the origin. Thus every cone of $\mathcal{AB}_n$ has non-empty intersection with $\mathcal{P}_n^\circ$. This proves the claim. 
\end{proof}

\begin{cor}\label{cor:maximal.equals.open.chambers}
The number of combinatorial tropical types of maximal polytropes in $\mathbb{TP}^{n-1}$ is precisely the number of equivalence classes of open cones $\mathcal{BB}_n$ up to action by $\mathbb{S}_n$. 
\end{cor}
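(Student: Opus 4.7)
The plan is to chain three earlier results together: Lemma \ref{lem:maximal}, which identifies maximal polytropes with open cones of $\GFP$; Proposition \ref{prop:via.gfan}, which matches $\GFP$ with $\mathcal{BB}_n$ on $\mathcal{P}_n^\circ$; and Definition \ref{def:equivalence}, which defines combinatorial tropical type as the quotient of tropical type by the $\mathbb{S}_n$ vertex-relabeling action.

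First I would argue that every maximal polytrope $Pol(c)$ (taken with $c = c^\ast$) has its cone $\mathcal{G}_c$ lying not merely in $\mathcal{P}_n$ but in $\mathcal{P}_n^\circ$. Indeed, a maximal polytrope is necessarily full-dimensional (having the maximal vertex count $\binom{2n-2}{n-1}$), so by Lemma \ref{lem:tp}(3) we have $c$ in the interior of $R_n$; combined with the strict triangle inequalities forced on $c = c^\ast$ in the full-dimensional case, this places $c$ in $\mathcal{P}_n^\circ$. Hence Lemma \ref{lem:maximal} gives a bijection between tropical types of maximal polytropes and open cones of $\GFP$ inside $\mathcal{P}_n^\circ$.

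Next I would invoke Proposition \ref{prop:via.gfan}, which states that on $\mathcal{P}_n^\circ$ the fan $\GFP$ equals $\mathcal{BB}_n$. Its proof moreover shows that every chamber of $\mathcal{BB}_n$ meets $\mathcal{P}_n^\circ$ in a full-dimensional set, so the open cones of $\GFP|_{\mathcal{P}_n^\circ}$ are exactly the intersections with $\mathcal{P}_n^\circ$ of the open chambers of $\mathcal{BB}_n$. Composing this with the bijection from the previous paragraph yields: tropical types of maximal polytropes are in bijection with open chambers of $\mathcal{BB}_n$.

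Finally, to pass to combinatorial tropical types, I would check that the $\mathbb{S}_n$ action by vertex relabeling $c \mapsto \sigma \cdot c$ descends to a permutation of the open chambers of $\mathcal{BB}_n$. This is immediate from the definitions: $\sigma$ simply permutes the index set $[n]$ appearing in each pair $(K,L)$ and in each $m$-bipartite binomial $(K,\sigma,\sigma',L)$, so $\sigma$ sends the set of hyperplanes defining $\mathcal{BB}_n$ to itself and acts on its chambers. By Definition \ref{def:equivalence}, two maximal polytropes have the same combinatorial tropical type iff their Kleene stars are related by $\sim$ after some vertex relabeling, which now translates exactly to the condition that the corresponding open chambers of $\mathcal{BB}_n$ lie in the same $\mathbb{S}_n$-orbit. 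I do not anticipate a substantive obstacle; the only point demanding any care is the containment $\mathcal{G}_c \subset \mathcal{P}_n^\circ$ (not merely $\mathcal{G}_c \cap \mathcal{P}_n \neq \emptyset$) in the first step, and this follows cleanly from maximality forcing strict triangle inequalities together with Lemma \ref{lem:tp}(3).
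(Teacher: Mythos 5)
Your proposal is correct and follows exactly the chain of reasoning the paper leaves implicit: the Corollary is stated without a separate proof precisely because it combines Lemma~\ref{lem:maximal} (maximal polytropes $\leftrightarrow$ open cones of $\GFP$), Proposition~\ref{prop:via.gfan} ($\GFP = \mathcal{BB}_n$ on $\mathcal{P}_n^\circ$, with every chamber of $\mathcal{BB}_n$ meeting $\mathcal{P}_n^\circ$), and Definition~\ref{def:equivalence} (combinatorial tropical type = tropical type modulo $\mathbb{S}_n$). One small streamlining of your first step: the containment $\mathcal{G}_c \subset \mathcal{P}_n^\circ$ needs no appeal to Lemma~\ref{lem:tp}(3) or an ad hoc discussion of strict triangle inequalities; an open cone of $\GFP$ is by definition an open subset of the closed cone $\mathcal{P}_n$, hence lies in its interior. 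With that simplification the argument is tight, and the observation that $\mathbb{S}_n$ permutes the defining hyperplanes of $\mathcal{BB}_n$ correctly finishes the passage to combinatorial types.
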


\begin{ex}[Maximal polytropes for $n = 4$]\label{ex:n4.maximal}
Number the binomials in Figure \ref{fig:six} from left to right, top to bottom. Here $\mathcal{BB}_4$ equals the hyperplane arrangement $\mathcal{AB}_4$. An open chamber of $\mathcal{BB}_4$ is a binary vector $z = \{\pm 1\}^6$, with $z_i = +1$ if in the $i$-th binomial, the left monomial is smaller than the right monomial. For example, $z_2 = +1$ correspond to the inequality $c_{12} + c_{34} < c_{14} + c_{32}$. There are at most $2^6 = 64$ open chambers in $\mathcal{BB}_4$. Not all of 64 possible values of $z$ define a non-empty cone. Indeed, the six normal vectors satisfy exactly one relation:
\vskip-2em
\begin{figure}[h]
\includegraphics[width=\textwidth]{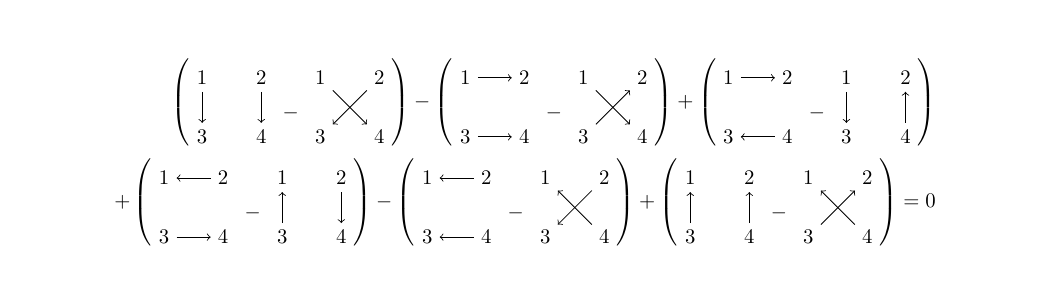}
\vskip-2em
\caption{The relation amongst the two-bipartite binomials for $n = 4$.}
\label{eqn:relation}
\end{figure}

This means $(1,-1,1,1,-1,1)$ and $(-1,1,-1,-1,1,-1)$ define empty cones. Thus there are 62 open chambers of $\mathcal{BB}_4$, correspond to 62 tropical types of maximal polytropes. The symmetric group $\mathbb{S}_4$ acts on the vertices of a polytrope $Pol(c)$ by permuting the labels of the rows and columns $c$. This translates to an action on the chambers of $\mathcal{BB}_4$. Up to the action of $\mathbb{S}_4$, we found six symmetry classes of chambers, corresponds to six combinatorial tropical types of maximal polytropes. Table \ref{tab:max4} shows a representative for each symmetry class and their orbit sizes. The first five corresponds to the five types discovered by Joswig and Kulas, presented in the same order in \cite[Figure 5]{JoswigK10}. The class of size 12 was discovered by Jimenez and de la Puente \cite{jimenez2012six}.

\begin{table}[h]
\begin{tabular}{|c|c|}
\hline
Representative & Orbit size \\
\hline
$(1,1,1,-1,1,1)$ & 6 \\
$(-1,1,1,-1,-1,1)$ & 8 \\
$(1,1,1,1,1,-1)$ & 6 \\
$(-1,1,-1,-1,-1,1)$ & 24 \\
$(-1,-1,1,1,-1,-1)$ & 6 \\
$(-1,-1,-1,-1,1,-1)$ & 12 \\
\hline
\end{tabular}
\vskip0.5em
\caption{Representatives and orbit sizes of the six maximal polytropes in $\mathbb{TP}^3$.}
\label{tab:max4}
\end{table}
\end{ex}

\section{Polytropes enumeration: algorithms, results and summary}\label{sec:algorithm}

\subsection{Algorithms and results}
We have two algorithms for enumerating combinatorial tropical types of full-dimensional polytropes in $\mathbb{TP}^{n-1}$. Recall that we are enumerating cones of $\GFP$ which are \emph{not} on $\partial R_n$, up to symmetry induced by $\mathbb{S}_n$. The two algorithms differ only in the first step of computing $\GFP$. The first computes $\GFP$ using a Gr\"{o}bner fan computation software such as \textsf{gfan} \cite{gfan}. In the second algorithm, one computes the polyhedral complex $\mathcal{P}_n$ first, then computes the refinement of its cones by $\mathcal{BB}_n$. Given $\GFP$, one can then remove all cones in $\partial R_n$. We find such cones as follows: for each cone, pick a point $c$ in the interior and compute the minimum cycle in the undirected graph with edge weights $c_{ij}$. If the minimum cycle is zero, this point comes from a cone on $\partial R_n$, and thus should be removed. A documented implementation of the first algorithm, with examples for $n = 4$ and input files for $n = 4, 5$ and $6$, is available at \url{https://github.com/princengoc/polytropes}. 

For $n = 4$, we found 1026 symmetry classes of cones in $\GFP$, of which 13 are in $\partial R_n$. Thus, there are 1013 combinatorial tropical types of polytropes in $\mathbb{TP}^3$. Table~\ref{tab:jk.table} classifies the types by the number of vertices of the polytrope. This corresponds to the first column of \cite[Table 1]{JoswigK10}.

\begin{table}[h]
\begin{tabular}{|c|c|c|c|c|c|c|c|c|c|c|c|c|c|c|c|c|c|c}
\hline
\# vertices & 4 & 5 & 6 & 7 & 8 & 9& 10& 11& 12& 13& 14& 15& 16& 17& 18& 19& 20 \\
\hline
\# types & 1 & 1 & 5 & 6 & 34 & 38 & 81 & 101 & 151 & 144 & 154 & 116 & 92 & 46 & 28 & 9 & 6 \\
\hline
\end{tabular}
\vskip1em
\caption{Combinatorial tropical types of full-dimensional polytropes in $\mathbb{TP}^3$, grouped by total number of vertices.}
\label{tab:jk.table}
\end{table}

We also implemented the second algorithm for $n = 4$. We found 273 equivalence classes of cones of the polyhedral complex $\mathcal{P}_4$. Table \ref{tab:gz} groups them by the number of equivalence classes of cones in the refinement $\mathcal{P}_4 \wedge \mathcal{BB}_4$ that they contain. Altogether, we obtain 1013 equivalence classes, agreeing with the first output.

\begin{table}[h]
\begin{tabular}{l|c|c|c|c|c|c|c|c|c|c|c|c}
$\#$ $F$ & 123 & 10 & 89 &19& 2 & 19 & 2& 3 & 3 & 1& 1 & 1 \\
\hline
$\#$ $(F,z)$ & 1 & 2 & 3 & 5 & 6 & 9 & 15 & 18 & 27 & 37 & 42 & 81
\end{tabular}
\vskip0.5em
\caption{Equivalence classes of cones $F$ of $\mathcal{P}_4$, grouped by the number of equivalence classes of cones in $\left.\mathcal{GF}_4\right|_\mathcal{P}$ that they correspond to. For instance, up to symmetry, there are 123 cones of $\mathcal{P}_4$ which are not subdivided by $\mathcal{BB}_4$, and thus they each yield one cone of $\left.\mathcal{GF}_4\right|_\mathcal{P}$. Up to symmetry, there are 10 cones of $\mathcal{P}_4$ which are subdivided into two by $\mathcal{BB}_4$, 89 cones subdivided into 3, and so on. The sum $123 \cdot 1 + 10 \cdot 2 + 89 \cdot 3 + \ldots + 1 \cdot 81$ equals 1013, agreeing with the number of equivalence classes of polytropes computed by~\textsf{gfan} \cite{gfan}.} \label{tab:gz}
\end{table}

The polytrope complex $\GFP$ grows large quickly. For $n = 5$, there are $27248$ open cones, correspond to combinatorial tropical types of maximal polytropes in $\mathbb{TP}^5$. This is clearly much bigger than six, the corresponding number for $n = 4$. The fan $\mathcal{BB}_5$ is the arrangement $\mathcal{AB}_5$ of $5 \binom{4}{2} = 30$ bipartite binomial hyperplanes. 
The orderings of the bipartite binomials which lead to empty cones of $\mathcal{AB}_n$ are precisely those which contain a circuit of the oriented matroid associated with $\mathcal{AB}_n$ \cite{orientedMatroid}. Up to permutation, there are 11 circuits. We list them on \url{https://github.com/princengoc/polytropes/output/n5relations.txt} in a format analogous to that in Figure \ref{eqn:relation}. 


Using \textsf{gfan} \cite{gfan}, we could not compute all cones of $\left.\mathcal{GF}_5\right|_\mathcal{P}$ or the open cones of $\left.\mathcal{GF}_6\right|_\mathcal{P}$ on a conventional desktop. However, we believe that such computations should be possible on more powerful machines. The open cones of $n = 6$ is particularly interesting, since this is the smallest $n$ for which $\mathcal{BB}_n$ is a strict coarsening of $\mathcal{AB}_n$. 

\subsection{Summary and open problems}
Tropical types of polytropes in $\mathbb{TP}^{n-1}$ are in bijection with cones of the polyhedral complex $\GFP$. This complex is the restriction of a certain 
Gr\"{o}bner fan $\mathcal{GF}_n \subset\R^{n^2-n}$ to a certain cone $\mathcal{P}_n$. We showed that $\GFP$ equals the refinement of several fans. These fans are significantly smaller than $\mathcal{GF}_n$, giving a computational advantage over brute force approaches. We utilized these results to enumerate all combinatorial tropical types of full-dimensional polytropes in $\mathbb{TP}^3$, and those of maximal polytropes in $\mathbb{TP}^4$. 

Theorem \ref{thm:S} establishes a bijection between cones of $\GFP$ and compatible sets of triangles and bipartite monomials. The central open question is to give an intrinsic characterization of this compatibility. This question has been answered for triangle monomials alone in \cite{tran.combi}, where sets of compatible triangles are indexed by a certain collection of trees. However, we do not know of such characterizations for the bipartite monomials. A characterization for compatibility amongst the bipartite monomials would potentially allow one to enumerate the open cones of $\mathcal{BB}_n$ up to $\mathbb{S}_n$ action. This number is precisely the number of tropical types of maximal polytropes. There are obvious requirements, such as if $(K,\sigma,L)$ is in the set, then any bipartite subgraph of $(K,\sigma,L)$ must also be in the set. However, this requirement alone is not enough. For instance, for $n = 4$, of the $64$ sets of bipartite monomials that satisfy the subgraph requirement, only $62$ define non-empty cones and thus are compatible (cf. Example \ref{ex:n4.maximal}). Even this example is not representative, as in this case, $\mathcal{BB}_4$ is the arrangement $\mathbb{AB}_4$, while in general $\mathcal{BB}_n$ is not a hyperplane arrangement. 

\subsection{Acknowledgements} I sincerely thank Bernd Sturmfels, Josephine Yu and Anders Jensen for stimulating discussions. Special thanks to Michael Joswig and Katja Kulas for their inspiring pictures of polytropes. I would like to thank two anonymous referees for their detailed reading and constructive comments of an earlier draft. This work was supported by an award from the Simons Foundation ($\#197982$ to The University of Texas at Austin).

\bibliographystyle{plain}
\bibliography{references}

\end{document}